\documentclass[11pt, a4paper, reqno]{amsart}

\newcommand{\ga}{\alpha}
\newcommand{\gb}{\beta}
\newcommand{\gd}{\delta}

\newcommand{\go}{\omega}

\newcommand{\vp}{\varphi}

\newcommand{\gO}{\Omega}

\newcommand{\cA}{\mathcal{A}}

\newcommand{\N}{\mathbb{N}}

\newcommand{\R}{\mathbb{R}}

\newcommand{\Exp}{\ensuremath {{\rm Exp}}}
\let\comp\circ

\newtheorem{theorem}{Theorem}[section]

\newtheorem{lemma}[theorem]{Lemma}
\newtheorem{corollary}[theorem]{Corollary}

\theoremstyle{definition}

\theoremstyle{remark}

\newtheorem{remark}[theorem]{Remark}

\newtheoremstyle{break}{9pt}{9pt}{\upshape}{}{\bfseries}{.}{\newline}{}
\theoremstyle{break}

\newtheorem{Assumptions}[theorem]{Assumptions}

\newenvironment{enum_a}
    {\begin{enumerate}\renewcommand{\labelenumi}{(\alph{enumi})}}
    {\end{enumerate}}
\newenvironment{enum_i}
    {\begin{enumerate}\renewcommand{\labelenumi}{(\roman{enumi})}
		}
    {\end{enumerate}}

\allowdisplaybreaks[1]
\numberwithin{equation}{section}

\title{Continuity of Random Fields on Riemannian Manifolds}
\dedicatory{Dedicated to the memory of Robert Schrader}

\author[A.~Lang]{Annika Lang}
\address{Annika Lang\newline
Department of Mathematical Sciences\newline
Chalmers University of Technology \& University of Gothenburg\newline
SE--41296 G{\"o}teborg, Sweden}
\email{annika.lang@chalmers.se}

\author[J.~Potthoff]{J\"urgen Potthoff}
\address{J\"urgen Potthoff\newline
Institut f\"ur Mathematik, Universit\"at Mannheim\newline
D--68131 Mannheim, Germany}
\email{potthoff@uni-mannheim.de}

\author[M.~Schlather]{Martin Schlather}
\address{Martin Schlather\newline
Institut f\"ur Mathematik, Universit\"at Mannheim\newline
D--68131 Mannheim, Germany}
\email{schlather@uni-mannheim.de}

\author[D.~Schwab]{Dimitri Schwab}
\address{Dimitri Schwab\newline
Institut f\"ur Mathematik, Universit\"at Mannheim\newline
D--68131 Mannheim, Germany}
\email{dschwab@mail.uni-mannheim.de}

\date{July 20, 2016}

\subjclass[2010] {Primary 60G60, 60G17; Secondary 53B21, 60G15}

\keywords{Random fields, Riemannian manifolds, Kolmogorov--Chentsov theorem, Gaussian random fields}

\begin{document}

\begin{abstract} 
A theorem of the Kolmogorov--Chentsov type is proved
for random fields on a Riemannian manifold.
\end{abstract}
\maketitle
\section{Introduction} \label{sectIntro}
One of the key theorems in the theory of stochastic processes is the 
Kolmogorov--Chentsov theorem (the classical references are~\cite{Sl37} and~\cite{Ch56}), 
which states the existence of a continuous modification of a given stochastic process 
based on tail or moment estimates of its increments.

In the present paper we prove a theorem of the Kolmogorov--Chentsov type for
random fields indexed by a finite--dimensional Riemannian manifold. A result of similar kind has 
been proved  in a recent paper, in which also results about differentiablity are shown~\cite{AnLa14}.
Concerning the Kolmogorov--Chentsov theorem the differences between the present
and the cited paper are twofold: For one, the continuity statement in~\cite{AnLa14}
is formulated in terms of coordinate mappings, while here we give an intrinsic
formulation, i.e., a direct formulation in terms of the Riemannian (topological) metric.
Secondly, the methods of proof are quite different: While the Sobolev embedding 
theorem is employed in~\cite{AnLa14}, we basically use here the classical
method via dyadic approximations and the Borel--Cantelli lemma. In fact,
our proof of the Kolmogorov--Chentsov theorem for random fields on a Riemannian
manifold is based on a localized variant of a theorem in~\cite{Po09a}, which is 
combined with a  local coordinatization of the underlying Riemannian manifold in 
terms of the  exponential map.

The organization of the paper is as follows. In Section~\ref{sectMain}
we recall the setup of the general Kolmogorov--Chentsov theorem in~\cite{Po09a}, and 
prove the above mentioned localized version of Theorem~2.8 in~\cite{Po09a}. In 
Theorem~\ref{thm2} below we state and prove our main assertion,
namely our theorem of Kolmogorov--Chentsov type for Riemannian manifolds. Finally, in 
Section~\ref{sectHolder} we discuss the existence of locally H\"older continuous
modifications, provide sufficient conditions in terms of moments of the increments,
and consider the special case of Gaussian random fields as examples.

\vspace*{.5\baselineskip}\noindent
\textbf{Acknowledgement.} We are grateful to Martin Schmidt for fruitful
discussions. The work of AL was supported in part by the Swedish Research Council 
under Reg.~No.~621-2014-3995 as well as the Knut and Alice~Wallenberg foundation.

\section{A Kolmogorov--Chentsov Theorem for Metric Spaces} \label{sectMain}
In this section we give a variant of the Kolmogorov--Chentsov type
theorem in~\cite{Po09a}, which follows rather directly from it, and 
in some sense sharpens that result.

Suppose that $(M,d)$ is a separable metric space, that $(\gO,\cA,P)$ is a  
probability space, and that $\phi = \bigl(\phi(x),\,x\in M\bigr)$ 
is a real-valued  random field on this probability space indexed by $M$. 

Assume furthermore that $r$ and $q$ are two strictly increasing functions
on an interval $[0,\rho)$, $\rho>0$, such that $r(0)=q(0)=0$. Throughout
this paper we suppose that for all $x$, $y\in M$ with $d(x,y)<\rho$,
we have the bound
\begin{equation}	\label{phibound}
	P\Bigl(\bigl|\phi(x)-\phi(y)\bigr|> r\bigl(d(x,y)\bigr)\Bigr) 
			\le q\bigl(d(x,y)\bigr).
\end{equation}

\begin{remark}
We take the occasion to correct a minor error in~\cite{Po09a}: There, this bound
has been formulated and used with a ``$\ge$'' sign for the event under the
probability, which for $x=y$ is obviously absurd. However, an inspection shows that
all arguments and results in~\cite{Po09a} remain correct, when replacing the 
condition there with the inequality~\eqref{phibound}. Alternatively, the condition 
in~\cite{Po09a} could be supplied with the additional restriction~$x\ne y$.
\end{remark}

We make the following assumptions on the metric space $(M,d)$:
\begin{Assumptions}\mbox{} \label{Assmpt}
\vspace*{-1\baselineskip}
\begin{enum_a}
	\item There exists an at most countable open cover $(U_n,\,n\in\N)$
			of $M$, and for every $n\in\N$, there exists a metric $d_n$ on $U_n$
			so that $\ga_n\, d_n(x,y)\le d(x,y)\le d_n(x,y)$ for all $x$, $y\in U_n$
			and some $\ga_n\in (0,1]$;
	\item for every $n\in\N$, $(U_n,d_n)$ is well separable in the sense of~\cite{Po09a}, i.e.:
			\begin{enum_i}
				\item there exists an increasing sequence $(D_{n,k},\,k\in\N)$
					  of finite subsets of $U_n$ such that $D_n = \bigcup_k D_{n,k}$
					  is dense in $(U_n,d_n)$, 
					  and for $x\in D_{n,k}$, let $C_{n,k}(x)= \{y\in D_{n,k},\,d_n(x,y)
					  \le \gd_{n,k}\}$, where $\gd_{n,k}$ denotes the minimal distance 
					  of distinct points in $D_{n,k}$ with respect to $d_n$;
				\item every $z\in U_n$ has a neighborhood $V\subset U_n$ so that for almost
                	  all $k\in\N$ and all $x$, $y\in D_{n,k+1}\cap V$, there exist $x'$, 
					  $y'\in D_{n,k}\cap V$ with $x'\in C_{n,k+1}(x)$,
					  $y'\in C_{n,k+1}(y)$, and $d_n(x',y')\le d_n(x,y)$;
			\end{enum_i}
				\item for $n$, $k\in\N$, let $\pi_{n,k}$ be the set of all unordered pairs 
					  $\langle x,y\rangle$, $x$, $y\in D_{n,k}$ with $d_n(x,y)\le \gd_{n,k}$, 
					  and let $|\pi_{n,k}|$ denote the number of elements in this set, then
					  \begin{align}
					   \sum_k |\pi_{n,k}|\,q(\gd_{n,k}) &<+\infty, \label{ineqpiq}\\
					   \sum_k r(\gd_{n,k}) &<+\infty \label{ineqr}
				      \end{align}
					  hold true.
\end{enum_a}
\end{Assumptions}

We remark that due to the assumption on the metrics $d$ and $d_n$, $n\in\N$, in~(a) above, 
the relative topology on $U_n$ generated by $d$ coincides with the topology generated by $d_n$.

For $x$, $y\in U_n$ with $d(x,y)<\ga_n \rho$, we can estimate as follows
\begin{align*}
	P\Bigl(\bigl|\phi(x)-\phi(y)\bigr|&> r\bigl(d_n(x,y)\bigr)\Bigr)\\
		&\le P\Bigl(\bigl|\phi(x)-\phi(y)\bigr|> r\bigl(d(x,y)\bigr)\Bigr)\\
		&\le q\bigl(d(x,y)\bigr)\\
		&\le q\bigl(d_n(x,y)\bigr),
\end{align*}
because $r$ and $q$ are both increasing. Theorem~2.8 in~\cite{Po09a} shows that from this 
estimate, together with the Assumptions~(a), (b), and~(c) above, it follows that for every
$n\in\N$, the restriction $\phi_n$ of $\phi$ to $U_n$ has a locally uniformly continuous 
modification $\psi_n$ which is such that $\phi_n$, $\psi_n$, and $\phi$ coincide on $D_n$. 
In more detail we have that for every $n\in\N$, there exists a random field $\psi_n$ indexed by $U_n$
such that
\begin{enum_i}
	\item for every $\go\in\gO$, the mapping $\psi_n(\cdot,\go): U_n \to \R$
			is locally uniformly continuous;
	\item for every $x\in U_n$, there exists a $P$--null set $N_{x,n}$ so that
		 	$\psi_n(x,\go) = \phi(x,\go)$ for all $\go$ in the complement of $N_{x,n}$,
			and if $x\in D_n$, $N_{x,n}$ can be chosen as the empty set.
\end{enum_i}
In order to get for $x\in M$ a universal $P$-null set $N_x$, we set
$N_x=\bigcup_{n'} N_{x,n'}$, where the union is over all $n'\in\N$ such
that $x\in U_{n'}$. Since this is a countable union, $N_x$ is indeed a $P$--null set.

From the modifications $\psi_n$ of $\phi_n$, $n\in\N$, we construct a 
locally uniformly continuous modification $\psi$ of $\phi$. We show

\begin{lemma}	\label{lem1}
\begin{equation*}
	P\bigl(\psi_n(x)=\psi_{n'}(x),\,x\in U_n\cap U_{n'},\,n,n'\in\N\bigr)=1.
\end{equation*}
\end{lemma}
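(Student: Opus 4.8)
The plan is to exploit that, for each pair $n,n'$, the restrictions of $\psi_n$ and $\psi_{n'}$ to the open set $U_n\cap U_{n'}$ are both continuous in $x$, so that their difference is a continuous function which I only need to control on a countable dense set. Since for each fixed $x$ both $\psi_n$ and $\psi_{n'}$ are modifications of $\phi$, property (ii) gives $\psi_n(x)=\phi(x)=\psi_{n'}(x)$ off a $P$-null set; the difficulty is that this exceptional set depends on $x$, and $U_n\cap U_{n'}$ is in general uncountable, so one cannot simply take the union over all $x$. The whole point of the argument is to reduce the uncountable family of almost-sure identities to a countable one using separability, and then to propagate the equality to all of $U_n\cap U_{n'}$ by continuity.

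Concretely, I would first fix a countable dense subset $D_\ast=\{z_1,z_2,\dots\}$ of the separable space $(M,d)$, and set
\[
 N=\bigcup_{x\in D_\ast}\ \bigcup_{n\,:\,x\in U_n} N_{x,n}.
\]
Being a countable union of $P$-null sets, $N$ is again a $P$-null set. By property (ii) of the modifications, for every $\omega\notin N$, every $x\in D_\ast$ and every $n$ with $x\in U_n$ we have $\psi_n(x,\omega)=\phi(x,\omega)$; in particular, for any pair $n,n'$ and any $x\in D_\ast\cap(U_n\cap U_{n'})$ it follows that $\psi_n(x,\omega)=\psi_{n'}(x,\omega)$.

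It then remains to pass from the dense set to the whole intersection. Here I would use that $U_n\cap U_{n'}$ is open, so that $D_\ast\cap(U_n\cap U_{n'})$ is dense in $U_n\cap U_{n'}$, since a countable dense subset of $M$ meets every nonempty open set in a dense subset of it. Moreover, as the relative topology induced by $d$ on $U_n$ coincides with the topology of $d_n$, property (i) shows that $\psi_n(\cdot,\omega)$ is $d$-continuous on $U_n$, and likewise $\psi_{n'}(\cdot,\omega)$ is $d$-continuous on $U_{n'}$; hence both are $d$-continuous on $U_n\cap U_{n'}$. For $\omega\notin N$ the continuous function $\psi_n(\cdot,\omega)-\psi_{n'}(\cdot,\omega)$ therefore vanishes on a dense subset of $U_n\cap U_{n'}$, and so it vanishes identically there. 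As this holds for every pair $n,n'\in\N$ simultaneously off the single null set $N$, I conclude that $\psi_n(x)=\psi_{n'}(x)$ for all $x\in U_n\cap U_{n'}$ and all $n,n'$ on the event $\gO\setminus N$, whose probability is $1$, which is the assertion.

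I expect the only genuine obstacle to be the bookkeeping in the first step, namely assembling the exceptional set as a \emph{countable} union so that $P(N)=0$, together with the observation that density of $D_\ast$ in $M$ transfers to density in each open intersection. Once continuity is invoked via the stated topological equivalence of $d$ and $d_n$, the conclusion is immediate, and no moment or separability estimates beyond the already established properties (i) and (ii) of the $\psi_n$ are needed.
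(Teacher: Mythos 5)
Your proof is correct and follows essentially the same route as the paper: reduce the uncountable family of almost-sure identities to a countable dense set via separability, then propagate by continuity of the $\psi_n$ (using that the $d_n$- and relative $d$-topologies on $U_n$ coincide). The only difference is bookkeeping --- you take one global countable dense set $D_\ast$ and intersect it with the open sets $U_n\cap U_{n'}$, assembling the null set up front, whereas the paper chooses a dense subset $E_{n,n'}$ of each intersection and unions the null sets over pairs at the end --- which changes nothing of substance.
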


\begin{proof}
Assume that $x\in U_n\cap U_{n'}$. Since $\psi_n$ and $\psi_{n'}$ are modifications
of $\phi$ when all these random fields are restricted to $U_n\cap U_{n'}$, we get
$\psi_n(x)=\psi_{n'}(x)$ on the complement of the $P$--null set $N_x$. Since
$(M,d)$ is separable so is $(U_n\cap U_{n'},d)$, and letting $x$ range over a countable
dense subset $E_{n,n'}$ and taking the union of all associated $P$--null sets, we  get the
existence of a $P$--null set $N_{n,n'}$ such that for all $x\in E_{n,n'}$, we have
$\psi_n(x) = \psi_{n'}(x)$ on the complement of $N_{n,n'}$. $\psi_n$ and $\psi_{n'}$
are continuous on $U_n\cap U_{n'}$, and hence we obtain for all $x\in U_n\cap U_{n'}$
the equality $\psi_n(x)=\psi_{n'}(x)$ on the complement of $N_{n,n'}$. Finally, we
set $N = \bigcup_{n,n'} N_{n,n'}$ so that we find for all $n$, $n'$, and all
$x\in U_n\cap U_{n'}$ the equality $\psi_n(x)=\psi_{n'}(x)$ on the complement of the
$P$--null set $N$.
\end{proof}

On the exceptional set~$N$ of the last lemma we define $\psi(x)=0$ for all $x\in M$.
On its complement we set $\psi(x) = \psi_n(x)$ whenever $x\in U_n$, and the last
lemma shows that this makes $\psi$ well-defined. For $x\in M$, define the $P$--null
set $N'_x = N_x\cup N$ where $N_x$ and $N$ are the $P$--null sets defined above.
We have $x\in U_n$ for some $n\in\N$, and for all $\go$ in the complement of $N'_x$, 
we find that $\psi(x,\go) = \psi_n(x,\go) = \phi(x,\go)$. Thus $\psi$ is a
modification of $\phi$. We have proved the following

\begin{theorem}	\label{thm1}
Under Condition~\eqref{phibound} on the random field $\phi$ and under the above
Assumptions~\ref{Assmpt} on $(M,d)$, $r$ and $q$, $\phi$ has a locally uniformly 
continuous modification.
\end{theorem}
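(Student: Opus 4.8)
The plan is to assemble the global modification $\psi$ from the local modifications $\psi_n$, using Lemma~\ref{lem1} to patch them together consistently and then verify the two defining properties of a modification: pathwise local uniform continuity and agreement with $\phi$ off a $P$-null set at each point.

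First I would invoke the machinery already set up before the statement. The application of Theorem~2.8 in~\cite{Po09a} to each chart $(U_n,d_n)$ has already produced, for every $n\in\N$, a random field $\psi_n$ on $U_n$ with properties~(i) and~(ii): each path $\psi_n(\cdot,\go)$ is locally uniformly continuous, and $\psi_n(x)=\phi(x)$ off a $P$-null set $N_{x,n}$ (empty when $x\in D_n$). Lemma~\ref{lem1} supplies a single $P$-null set $N$ off which all the $\psi_n$ agree on their pairwise overlaps. The construction of $\psi$ is then exactly the one indicated in the paragraph following Lemma~\ref{lem1}: set $\psi(\cdot,\go)\equiv 0$ for $\go\in N$, and for $\go\notin N$ put $\psi(x,\go)=\psi_n(x,\go)$ for any $n$ with $x\in U_n$. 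Consistency on overlaps (Lemma~\ref{lem1}) guarantees this is well-defined independently of the choice of $n$.

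Next I would check local uniform continuity of the paths. For $\go\in N$ this is trivial since $\psi(\cdot,\go)$ is identically zero. For $\go\notin N$ and a fixed point $x\in M$, choose $n$ with $x\in U_n$; since $(U_n,\,n\in\N)$ is an open cover, $U_n$ is a neighborhood of $x$ in the $d$-topology, and by property~(i) the path $\psi_n(\cdot,\go)$ is locally uniformly continuous on $U_n$. On $U_n$ we have $\psi(\cdot,\go)=\psi_n(\cdot,\go)$, so $\psi(\cdot,\go)$ is locally uniformly continuous near $x$. Here I would want to lean on the remark that the relative $d$-topology on $U_n$ coincides with the $d_n$-topology, together with the comparison $\ga_n\,d_n\le d\le d_n$, to pass the uniform-continuity estimate between the two metrics on the neighborhood in question.

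Finally I would verify the modification property: for each $x\in M$, pick $n$ with $x\in U_n$ and set $N'_x=N_x\cup N$, a $P$-null set as a finite union of $P$-null sets. For $\go\notin N'_x$ we have $\psi(x,\go)=\psi_n(x,\go)=\phi(x,\go)$, using property~(ii) and the definition of $\psi$. Since $N'_x$ is $P$-null, $\psi(x)=\phi(x)$ almost surely, so $\psi$ is a modification of $\phi$; combined with the continuity just established, this proves the theorem. I do not anticipate a serious obstacle, as the heavy lifting is already done in~\cite{Po09a} and in Lemma~\ref{lem1}; the one point requiring care is ensuring that local uniform continuity genuinely transfers from each chart to the glued field, i.e.\ that the neighborhoods and metrics match up as claimed via Assumption~\ref{Assmpt}(a).
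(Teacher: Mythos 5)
Your proposal is correct and follows essentially the same route as the paper: applying Theorem~2.8 of~\cite{Po09a} on each chart $(U_n,d_n)$, invoking Lemma~\ref{lem1} to make the glued field $\psi$ well-defined, and then verifying continuity and the modification property via the null sets $N'_x=N_x\cup N$. This is precisely the argument the paper gives in the paragraphs surrounding Lemma~\ref{lem1}.
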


\section{A Kolmogorov--Chentsov Theorem for Riemannian Manifolds} \label{sectPf}
We begin this section by recalling the necessary terminology of Riemannian geometry,
setting up our notation at the same time. For further background
the interested reader is referred to the standard literature, e.g.\ \cite{He78, 
Jo11, Pe06}.

Assume that $m\in\N$ and that~$(M,g)$ is an~$m$--dimensional Riemannian manifold as 
defined in~\cite{He78}. That is, $M$ is a connected, $m$--dimensional 
$C^\infty$--manifold together with a symmetric, strictly positive definite tensor 
field~$g$ of type~$(0,2)$. For each $x\in M$, the Riemannian metric~$g$ determines
an inner product $g_x(\cdot,\cdot)$ on the tangent space $T_x M$ at $x$:
\begin{align*}
	g_x: T_x M\times T_x M &\to \R\\
		 (X,Y) &\mapsto g_x(X,Y).
\end{align*}
The corresponding norm on $T_x M$ is given by 
\begin{equation*}
	\|X\| = g_x(X,X)^{1/2},\qquad X\in T_x M.
\end{equation*}

Let $c: [a,b] \to M$ be a smooth curve in~$M$. Then its derivative $c'(t)$ at $t\in (a,b)$
belongs to $T_{c(t)}M$, and the length of $c$ is given by
\begin{equation*}
	L(c)=\int_a^b \|c'(t)\| \, dt.	
\end{equation*}
The Riemannian distance $d(x,y)$ of two points $x$, $y\in M$ is defined as the infimum of 
the lengths of curve segments joining $x$ and $y$. Indeed, $d$ is a metric on $M$ and it can be 
shown that under the given assumptions on $M$ the metric space $(M,d)$ 
is separable, locally compact and connected \cite[Proposition~I.9.6]{He78}. 
Furthermore, the original topology and the topology defined by~$d$ 
coincide~\cite[Corollary~I.9.5]{He78}.

We denote the open ball of radius $R>0$ centered at $x\in M$ relative to the metric $d$ by 
$B_R^d(x)$, while the ball of radius $R$ in $T_xM$ with center at $X\in T_xM$ with respect to 
the norm $\|\cdot\|$ is denoted by $B_R(X)$.

With the Riemannian metric $g$ there is canonically associated --- via
the notions of parallel transport and geodesics ---
the exponential map $(\Exp_x,\,x\in M)$, which for each $x\in M$ is a mapping
from $T_x M$ into a neighborhood of~$x$ in~$M$. It can be shown 
that for each $x\in M$, there exists a radius $R(x)>0$ such that $\Exp_x$ maps 
$B_{R(x)}(0)$ diffeomorphically onto $B^d_{R(x)}(x)$~\cite[Theorem~I.9.9, Proposition~I.9.4]{He78}. 
Moreover, for all $Y$, $Z\in B_{R(x)}(0)$ such that $\Exp_x(Y)=y$, $\Exp_x(Z)=z$, the 
quotient $\| Y-Z\|/d(y,z)$ converges to $1$ as 
$(y,z)\rightarrow (x,x)$ \cite[Proposition~I.9.10]{He78}. 

In view of Theorem~\ref{thm1} in Section~\ref{sectMain}, we construct a countable cover 
$(U_n,\,n\in\N)$ of $M$ as follows. The separability of $M$ (see above) allows us
to fix a countable dense subset $\{x_n,\,n\in\N\}$ of $M$. For every $n\in\N$, choose 
$R_n\in \bigl(0,1/(2\sqrt{m})\bigr]$ in such a way that:
\begin{enumerate} \renewcommand{\labelenumi}{\arabic{enumi}.}
	\item the exponential map $\Exp_{x_n}$ is a diffeomorphism from $B_{R_n}(0)\subset T_{x_n}M$ onto 
			$B^d_{R_n}(x_n)\subset M$,
	\item for all $X$, $Y\in B_{R_n}(0)$ such that $\Exp_{x_n}(X)=x$, $\Exp_{x_n}(Y)=y$,
			$x$, $y\in B^d_{R_n}(x_n)$,
			\begin{equation} \label{metrineq}
				 2^{-1}\|X-Y\| \le d(x,y) \le 2 \|X-Y\|.
			\end{equation}
\end{enumerate}
The existence of a strictly positive $R_n$ for each $n\in\N$ with these properties follows 
from the facts mentioned before.

The idea is now to use the exponential map in order to define a convenient
coordinatization of $U_n$ and to use inequality~\eqref{metrineq} for
the definition of a suitable metric $d_n$ on $U_n$. To this end, we fix
an orthonormal basis $(X_{n,1},\dotsc, X_{n,m})$ of $(T_{x_n} M,g_{x_n})$ so that
every $X\in T_{x_n}M$ can be written in a unique way as
\begin{equation*}
	X = \sum_{i=1}^m a_i X_{n,i}
\end{equation*}
with $a=(a_1,\dotsc, a_m)\in \R^m$. Let us denote the so defined linear
mapping from $\R^m$ onto $T_{x_n}M$ by $L_{x_n}$. The orthonormality
of $(X_{n,1},\dotsc, X_{n,m})$ entails that $L_{x_n}$ is 
an isometric isomorphism if $\R^m$ is equipped with the standard euclidean
metric. In particular, the ball $B_{R_n}(0)$ is under $L_{x_n}$ in one-to-one
correspondence with the euclidean ball $B^m_{R_n}(0)$ in $\R^m$. Define
\begin{equation*}
	\vp_n(x) = L_{x_n}^{-1}\comp \Exp_{x_n}^{-1}(x),\qquad x\in B^d_{R_n}(x_n),
\end{equation*}
then $\vp_n$ is a $C^\infty$--coordinatization of $B^d_{R_n}(x_n)$ which 
maps this ball onto $B_{R_n}^m(0)\subset \R^m$. 

For $x$, $y\in B^d_{R_n}(x_n)$, define
\begin{equation}	\label{maxmetr}
	d_n(x,y) = 2\sqrt{m}\,\max_{i=1,\dotsc,m}\bigl|\vp_n^i(x)-\vp_n^i(y)\bigr|,
\end{equation}
where $\vp_n^i(x)$ denotes the $i$--th Cartesian coordinate of $\vp_n(x)$.
Set $\ga_n = 1/(4\sqrt{m})$. If $\|\,\cdot\,\|_2$ denotes the usual euclidean
norm on $\R^m$, we obtain from~\eqref{metrineq}
\begin{align*}
	\ga_n\, d_n(x,y)
		&= 2^{-1} \max_i \bigl|\vp_n^i(x)-\vp_n^i(y)\bigr|\\
		&\le 2^{-1}\|\vp_n(x)-\vp_n(y)\|_2\\
		&= 2^{-1} \|\Exp_{x_n}^{-1}(x)-\Exp_{x_n}^{-1}(y)\|\\[1ex]
		&\le d(x,y)\\[1ex]
		&\le 2 \|\Exp_{x_n}^{-1}(x)-\Exp_{x_n}^{-1}(y)\|\\[1ex]
		&= 2 \|\vp_n(x)-\vp_n(y)\|_2\\[1ex]
		&\le d_n(x,y).
\end{align*}

\vspace*{.5\baselineskip}
Consider the open hypercube $H_{R_n}^m(0)$
\begin{equation*}
	H_{R_n}^m(0) = \bigl\{x\in\R^m,\,\max_{i=1,\dotsc, m}|x_i|< m^{-1/2} R_n\bigr\} 
\end{equation*}
in $\R^m$ of sidelength $2m^{-1/2}R_n$ centered at the origin. Clearly we have
$H_{R_n}^m(0)\subset B_{R_n}^m(0)$. Set
\begin{equation*}
	U_n = \vp_n^{-1}\bigl(H_{R_n}^m(0)\bigr)
\end{equation*}
so that $(U_n,\,n\in\N)$ is an open cover of~$M$. 

For each $k\in\N$, define the following subset $G_{n,k}$ of the hypercube $H_{R_n}^m(0)$
\begin{equation*}
	G_{n,k} = \Bigl\{a\in\R^m,\, a = -\frac{R_n}{\sqrt{m}} + \frac{lR_n}{2^k \sqrt{m}},\,
					l\in \bigl\{1,\dotsc, 2^{k+1}-1\bigr\}^m\Bigr\}.
\end{equation*}
By construction, for each $n \in \N$, $(G_{n,k},\,k\in\N)$ is an increasing sequence of 
finite subsets of $H^m_{R_n}(0)$, and the union of these sets is dense in $H^m_{R_n}(0)$. 
Next set $D_{n,k} = \vp_n^{-1}(G_{n,k})$. Then for each $n\in\N$, $(D_{n,k},\,k\in\N)$
is an increasing sequence of subsets of $U_n$, its limit being dense in $U_n$.
Moreover, it is easy to see that Condition~(b.ii) of Assumption~\ref{Assmpt} holds
true for the sequence $(D_{n,k},\,k\in\N)$, where for every $z\in U_n$, we may
choose the neighborhood $V$ in this condition as $U_n$ itself. (For an
explicit argument, see also~\cite{Po09a}.)

By construction we have (in terms of the notation of Section~\ref{sectMain})
\begin{equation*}
	\gd_{n,k} = \min\,\bigl\{d_n(x,y),\,x,y\in D_{n,k},\,x\ne y\bigr\} 
		= 2^{-k+1} R_n. 
\end{equation*}
The number $|\pi_{n,k}|$ of unordered pairs in $\pi_{n,k}$ (cf.\ Assumption~\ref{Assmpt}.(c)) 
can be bounded from above by $K_m 2^{mk}$ for some constant $K_m$. 

Now let $\rho\in(0,1]$, and make the usual choices of the functions $r$, and $q$:
\begin{align}
	r(h) &= \log_2(h^{-1})^{-\gb}, \label{defr}\\
	q(h) &= K\,\log_2(h^{-1})^{-\ga}\,h^m \label{defq},
\end{align}
for $h\in(0,\rho)$, and $r(0)=q(0)=0$. Here $K > 0$ is an arbitrary constant, and $\ga$, $\gb>1$.
Then it is straightforward to check that the inequalities~\eqref{ineqpiq}, \eqref{ineqr}
hold true.

Thus we can apply Theorem~\ref{thm1} and obtain

\begin{theorem}	\label{thm2}
Suppose that $\phi$ is a random field defined on an $m$--dimensional Riemannian 
manifold $(M,g)$ with topological metric $d$ such that for all $x$, $y\in M$ with $d(x,y)<\rho$,
\begin{equation*}
	P\Bigl(\bigl|\phi(x)-\phi(y)\bigr|> r\bigl(d(x,y)\bigr)\Bigr) 
			\le q\bigl(d(x,y)\bigr)
\end{equation*}
holds true, where the functions $r$, $q$ are defined as in~\eqref{defr}, \eqref{defq} for
some constants $K>0$, $\ga$, $\gb>1$. Then $\phi$ has a locally uniformly continuous 
modification.
\end{theorem}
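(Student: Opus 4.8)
The plan is to obtain Theorem~\ref{thm2} as an immediate consequence of Theorem~\ref{thm1}, by checking that the Riemannian data assembled above meet all of its hypotheses. The bound~\eqref{phibound} with the Riemannian distance $d$ and the functions $r$, $q$ of~\eqref{defr}, \eqref{defq} is exactly what is assumed in the statement, so it remains to verify Assumptions~\ref{Assmpt}(a)--(c) for the cover $(U_n,n\in\N)$, the metrics $d_n$ of~\eqref{maxmetr}, the constants $\ga_n=1/(4\sqrt m)$, and the dyadic families $D_{n,k}=\vp_n^{-1}(G_{n,k})$. Parts~(a) and~(b) are already supplied by the construction: the sets $U_n=\vp_n^{-1}\bigl(H_{R_n}^m(0)\bigr)$ form a countable open cover, the displayed chain of inequalities gives $\ga_n d_n\le d\le d_n$ with $\ga_n\in(0,1]$, and the grids $D_{n,k}$ are finite, increasing, with dense union, while~(b.ii) holds with the choice $V=U_n$. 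Hence the single substantive step is Assumption~\ref{Assmpt}(c), the summability of the two series~\eqref{ineqpiq} and~\eqref{ineqr}.

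For this I would substitute the computed minimal distance $\gd_{n,k}=2^{-k+1}R_n$ and the bound $|\pi_{n,k}|\le K_m 2^{mk}$ on the number of admissible pairs into~\eqref{defr}, \eqref{defq}. Setting $\gl_{n,k}=\log_2(\gd_{n,k}^{-1})=k-1+\log_2(R_n^{-1})$, which is positive and grows linearly in $k$ since $R_n\le 1/(2\sqrt m)<1$, one reads off $r(\gd_{n,k})=\gl_{n,k}^{-\gb}$ and $q(\gd_{n,k})=K\,\gl_{n,k}^{-\ga}\,(2R_n)^m\,2^{-mk}$. Then~\eqref{ineqr} becomes $\sum_k \gl_{n,k}^{-\gb}$, a convergent $p$--series because $\gb>1$, while in~\eqref{ineqpiq} the entropy growth $2^{mk}$ of the pair count is annihilated exactly by the volume factor $\gd_{n,k}^m\sim 2^{-mk}$ sitting inside $q$, leaving
\begin{equation*}
	|\pi_{n,k}|\,q(\gd_{n,k})\le K_m K\,(2R_n)^m\,\gl_{n,k}^{-\ga},
\end{equation*}
whose sum over $k$ converges since $\ga>1$. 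This cancellation of counting against decay, with the logarithmic exponents $\ga,\gb>1$ providing the actual convergence, is the core of the Kolmogorov--Chentsov mechanism and the one point I expect to demand real care.

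Finally I would dispose of two routine checks and conclude. The functions $r$ and $q$ are strictly increasing on $[0,\rho)$ with $r(0)=q(0)=0$, as Theorem~\ref{thm1} requires, since each is built from the increasing factors $\log_2(h^{-1})^{-1}$ (and, for $q$, the factor $h^m$) and tends to $0$ as $h\to0$; and after shrinking each $R_n$ if necessary so that $2R_n<\rho$, every argument $\gd_{n,k}$ lies in $(0,\rho)$ where $r$, $q$ are defined, it being understood in any case that only the tails of the series in~(c) govern convergence. With Assumptions~\ref{Assmpt}(a)--(c) verified and~\eqref{phibound} assumed, Theorem~\ref{thm1} applies and produces the desired locally uniformly continuous modification of $\phi$.
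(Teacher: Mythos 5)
Your proposal is correct and takes essentially the same route as the paper: there, the proof of Theorem~\ref{thm2} consists exactly of the construction preceding its statement (the cover $(U_n)$, the metrics $d_n$, the grids $D_{n,k}$, and the quantities $\gd_{n,k}=2^{-k+1}R_n$, $|\pi_{n,k}|\le K_m 2^{mk}$) followed by an appeal to Theorem~\ref{thm1}, with the verification of Assumption~\ref{Assmpt}(c) dismissed as ``straightforward to check.'' Your explicit verification of (c) --- the cancellation of the pair count $2^{mk}$ against the factor $\gd_{n,k}^m$ inside $q$, leaving series that converge because $\ga,\gb>1$ --- together with your remark about keeping the $\gd_{n,k}$ inside the domain $[0,\rho)$ of $r$ and $q$, simply fills in details the paper leaves implicit.
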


\section{H\"older Continuity and Moment Conditions}\label{sectHolder}
While a locally uniformly continuous modification was constructed in the 
previous section, the goal here is to show higher regularity in terms of 
orders of H\"older continuity under additional assumptions.

Therefore, let $(M,d)$ be a metric space and $\phi$ be as in Section~\ref{sectMain}, 
Assumption~\ref{Assmpt}. For the sequences of minimal distances $(\delta_{n,k},k\in\N)$ of 
distinct points in the grids $(D_{n,k},k\in\N)$ and the function $r$, we make the stronger 
assumptions

\goodbreak
\begin{Assumptions}{\ }	\label{Assmptii}
\vspace*{-1\baselineskip}
\begin{enum_a}
	\item For every $n\in\N$, there exist constants $\eta_n\in(0,1)$, $C_n>0$ such that 
				for almost all $k\in\N$, 
				\begin{equation}	\label{eqHolderi}
					\frac{1}{C_n} \, \eta_n^{k}\leq \delta_{n,k}\leq C_n\, \eta_n^k
				\end{equation}
				holds true;
	\item  there exist constants $\gamma\in (0,1)$, $K_\gamma>0$ so that
				for all $h\in [0,\rho)$, the inequality 
				\begin{equation}\label{ineqrnew}
					r(h)\le K_\gamma h^\gamma
				\end{equation}
				is valid.
\end{enum_a}
\end{Assumptions}

\noindent
(Note that Assumption~(b) on~$r$ above is stronger than the requirement of inequality~\eqref{ineqr}.)
Then, on every $(U_n, d_n)$, we are in the situation of  Theorem~2.9 in \cite{Po09a} and 
get the existence of a modification $\psi_n$ which is locally H\"older continuous of 
order $\gamma$, i.e., for every $\omega\in\Omega$ and every $z\in U_n$, there exists a 
neighborhood $V(\omega)$ of $z$ in $(U_n,d_n)$ and a constant $\ga_{\gamma,n}$ such that
\begin{equation*}
	\sup_{x,y\in V(\omega),\, x\neq y} 
		\left| \frac{\psi_n(x,\omega)-\psi_n(y,\omega)}{d_n(x,y)^{\gamma}}\right |\leq \ga_{\gamma,n}.
\end{equation*}
Actually, the constant $\ga_{\gamma,n}$ was explicitly calculated in~\cite{Po09a} and is
given by
\begin{equation*}
	\ga_{\gamma, n} = 2 K_\gamma\,\frac{C_n^{2\gamma}}{\eta_n^\gamma (1-\eta_n^\gamma)}.
\end{equation*}

Again we can glue these modifications together to get a modification $\psi$ of $\phi$ on $(M,d)$ 
which is locally H\"older continuous of order~$\gamma$. 

\begin{corollary}	\label{cori}
Assume that Condition~\eqref{phibound} on the random field $\phi$ holds true. Suppose
furthermore that the Assumptions~\ref{Assmpt} are valid, together with the additional stronger
properties given in Assumptions~\ref{Assmptii}. Then $\phi$ has a modification which is 
locally H\"older continuous of order~$\gamma$.
\end{corollary}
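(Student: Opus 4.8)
The plan is to reuse almost verbatim the gluing construction behind Theorem~\ref{thm1}, now feeding in the \emph{Hölder} modifications rather than the merely uniformly continuous ones. The starting point, already recorded in the paragraph preceding the statement, is that under Assumptions~\ref{Assmpt} and~\ref{Assmptii} each restriction $\phi_n$ of $\phi$ to $(U_n,d_n)$ falls under Theorem~2.9 of~\cite{Po09a} and hence admits a modification $\psi_n$ that is locally Hölder continuous of order $\gamma$ with respect to $d_n$, with the explicit constant $\ga_{\gamma,n}$. Since Hölder continuity entails continuity, each such $\psi_n$ is in particular a continuous modification of $\phi_n$, and these are exactly the hypotheses used in the proof of Lemma~\ref{lem1}; that proof therefore applies unchanged and produces a $P$-null set $N$ off which $\psi_n(x)=\psi_{n'}(x)$ for all $x\in U_n\cap U_{n'}$ and all $n,n'\in\N$.

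I would then define $\psi$ exactly as in the construction following Lemma~\ref{lem1}: set $\psi(\cdot)\equiv 0$ on $N$ and $\psi(x)=\psi_n(x)$ for $x\in U_n$ on the complement of $N$. Lemma~\ref{lem1} makes this well defined, and the same reasoning as before Theorem~\ref{thm1} (via the null sets $N'_x=N_x\cup N$) shows that $\psi$ is a modification of $\phi$. It then remains to check the Hölder property for each fixed $\omega\in\Omega$. For $\omega\in N$ the path $\psi(\cdot,\omega)$ is identically zero, hence trivially locally Hölder continuous of order $\gamma$. For $\omega\notin N$ and an arbitrary $z\in M$, choose $n$ with $z\in U_n$; by definition $\psi(\cdot,\omega)=\psi_n(\cdot,\omega)$ on all of $U_n$, so it suffices to transport the local Hölder estimate for $\psi_n$ at $z$ to $\psi$.

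The one genuine point to get right --- the crux rather than a true obstacle --- is converting Hölder continuity measured in $d_n$ into Hölder continuity measured in the Riemannian metric $d$. Here I would invoke Assumption~\ref{Assmpt}(a), which gives $\ga_n\,d_n(x,y)\le d(x,y)$, i.e.\ $d_n(x,y)\le \ga_n^{-1}d(x,y)$. Combined with the $d_n$-estimate of Theorem~2.9, this yields on the neighborhood $V(\omega)\subseteq U_n$ furnished by that theorem
\begin{equation*}
	\bigl|\psi_n(x,\omega)-\psi_n(y,\omega)\bigr|
		\le \ga_{\gamma,n}\,d_n(x,y)^{\gamma}
		\le \ga_{\gamma,n}\,\ga_n^{-\gamma}\,d(x,y)^{\gamma},\qquad x,y\in V(\omega).
\end{equation*}
Because $d$ and $d_n$ induce the same relative topology on $U_n$ and $U_n$ is open in $M$ (as remarked after Assumptions~\ref{Assmpt}), $V(\omega)$ is also a $d$-neighborhood of $z$ in $M$, and on it $\psi(\cdot,\omega)=\psi_n(\cdot,\omega)$. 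Thus $\psi(\cdot,\omega)$ obeys the Hölder bound of order $\gamma$ with constant $\ga_{\gamma,n}\ga_n^{-\gamma}$ on a $d$-neighborhood of the arbitrary point $z$, which is precisely local Hölder continuity of order $\gamma$ with respect to $d$, completing the argument.
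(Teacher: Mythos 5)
Your proposal is correct and follows essentially the same route as the paper, which simply invokes the gluing construction of Lemma~\ref{lem1} and Theorem~\ref{thm1} applied to the H\"older modifications $\psi_n$ furnished by Theorem~2.9 of~\cite{Po09a}. Your explicit conversion of the $d_n$-H\"older bound into a $d$-H\"older bound via $d_n(x,y)\le \ga_n^{-1}d(x,y)$ from Assumption~\ref{Assmpt}(a) is exactly the step the paper leaves implicit (note only that in Corollary~\ref{cori} the metric $d$ is a general one, not yet the Riemannian distance, which changes nothing in your argument).
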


We return to the case where $M$ is an $m$--dimensional Riemannian manifold with topological 
metric~$d$. Let the open cover $((U_n,d_n),n\in\N)$, and the sequences of grids 
$((D_{n,k},\delta_{n,k}),k\in\N)$, $n\in\N$, be defined as in Section~\ref{sectPf}. Recall 
that $\delta_{n,k}=2^{-k+1}R_n$ and $R_n\in(0,1/2\sqrt{m})]$, $n$, $k\in\N$. Set $\eta_n=1/2$, 
and choose $C_n\geq 1/(2R_n)$. Then Condition~\eqref{eqHolderi} is fulfilled. As before let 
$\rho$ be in $(0,1]$. Define $q$ as in \eqref{defq}, and 
\begin{equation}\label{defrnew}
	r(h)=h^{\gamma}
\end{equation}
for some $\gamma\in(0,1)$. Then Condition~\eqref{ineqrnew} is valid as well, and so we arrive at

\begin{corollary} \label{corii}
Let $\phi$ be a random field defined on an $m$--dimensional Riemannian manifold $(M,g)$, 
$m\in\N$, with topological metric $d$, such that for all $x$, $y\in M$ with $d(x,y)<\rho$,
\begin{equation*}
	P\Bigl(\bigl|\phi(x)-\phi(y)\bigr|> r\bigl(d(x,y)\bigr)\Bigr) 
			\le q\bigl(d(x,y)\bigr)
\end{equation*}
holds true, where the functions $r$, $q$ are defined as in \eqref{defrnew}, \eqref{defq} for
some constants $K>0$, $\ga>1$, $\gamma\in(0,1)$. Then $\phi$ has a locally H\"older continuous 
modification of order~$\gamma$.
\end{corollary}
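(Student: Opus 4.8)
The plan is to recognize Corollary~\ref{corii} as the Riemannian analogue of Corollary~\ref{cori}, in exactly the same way that Theorem~\ref{thm2} is the Riemannian analogue of Theorem~\ref{thm1}. Concretely, I would re-use verbatim the construction of Section~\ref{sectPf} --- the countable cover $(U_n,\,n\in\N)$, the auxiliary metrics $d_n$ of~\eqref{maxmetr}, the coordinatizations $\vp_n$, and the grids $D_{n,k}=\vp_n^{-1}(G_{n,k})$ --- and then simply check that the present choices of $r$ and $q$ place us in the hypotheses of Corollary~\ref{cori}. The conclusion of that corollary is then precisely the assertion of Corollary~\ref{corii}.

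First I would record what is already available from Section~\ref{sectPf}. The Assumptions~\ref{Assmpt}.(a), (b), and~(c) were verified there for this very cover, using the bound $|\pi_{n,k}|\le K_m 2^{mk}$, the identity $\gd_{n,k}=2^{-k+1}R_n$, and the function $q$ of~\eqref{defq}; in particular the summability~\eqref{ineqpiq} holds, and since $q$ is left unchanged here it needs no re-examination. The probability bound~\eqref{phibound} is part of the hypothesis. Thus only the additional Assumptions~\ref{Assmptii} require attention.

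Next I would verify Assumptions~\ref{Assmptii}. For part~(a), the explicit value $\gd_{n,k}=2^{-k+1}R_n=2R_n\,(1/2)^k$ shows that the choice $\eta_n=1/2$ together with any $C_n\ge 1/(2R_n)$ yields~\eqref{eqHolderi}: indeed $R_n\le 1/(2\sqrt m)\le 1/2$ forces $2R_n\le 1\le 1/(2R_n)\le C_n$, so $\tfrac{1}{C_n}\le 2R_n\le C_n$ and both inequalities in~\eqref{eqHolderi} hold for every $k$. For part~(b), the choice $r(h)=h^\gamma$ of~\eqref{defrnew} satisfies~\eqref{ineqrnew} trivially with $K_\gamma=1$; as noted in the text this is stronger than~\eqref{ineqr}, which in any case follows from $\sum_k \gd_{n,k}^\gamma=(2R_n)^\gamma\sum_k 2^{-k\gamma}<\infty$ since $\gamma>0$. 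With all hypotheses of Corollary~\ref{cori} in force, that corollary delivers a modification $\psi$ of $\phi$ that is locally H\"older continuous of order~$\gamma$.

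The one conceptual point I would flag --- and the only place where the Riemannian geometry re-enters --- is that the H\"older estimates produced on each piece via Theorem~2.9 of~\cite{Po09a} are with respect to the auxiliary metric $d_n$, whereas the assertion concerns the Riemannian metric~$d$. This is exactly where the equivalence $\ga_n d_n\le d\le d_n$ of Assumption~\ref{Assmpt}.(a), established in Section~\ref{sectPf}, is used: its lower half gives $d_n(x,y)\le \ga_n^{-1}d(x,y)$, so a $\gamma$--H\"older bound in $d_n$ with constant $\ga_{\gamma,n}$ transfers to a $\gamma$--H\"older bound in $d$ with constant $\ga_{\gamma,n}\,\ga_n^{-\gamma}$. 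Because the two metrics are bi-Lipschitz equivalent on $U_n$, the H\"older exponent~$\gamma$ is preserved while only the constant changes, and since $(U_n,\,n\in\N)$ is an open cover, local H\"older continuity on each piece is local H\"older continuity on all of $(M,d)$. This transfer is already built into the proof of Corollary~\ref{cori}, so beyond the verification above no further argument is needed.
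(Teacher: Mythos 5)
Your proposal is correct and follows exactly the paper's route: the paper likewise reuses the Section~\ref{sectPf} cover and grids, verifies Assumptions~\ref{Assmptii} via $\delta_{n,k}=2^{-k+1}R_n$ with $\eta_n=1/2$, $C_n\ge 1/(2R_n)$, notes that $r(h)=h^\gamma$ satisfies~\eqref{ineqrnew}, and then invokes Corollary~\ref{cori}. Your extra checks (re-verifying~\eqref{ineqr} for the new $r$, and the transfer of the H\"older bound from $d_n$ to $d$ via $d_n\le\ga_n^{-1}d$) are details the paper leaves implicit in the gluing argument of Corollary~\ref{cori}, but they are the same argument, just spelled out.
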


The standard application of Chebychev's inequality yields sufficient conditions in terms of moments: 

\begin{corollary}	\label{coriii}
Suppose that $\phi$ is a random field defined on an $m$--dimensional Riemannian manifold $M$, $m\in\N$, 
with topological metric $d$.
\begin{enum_a}
	\item If there exist $\rho\in(0,1]\,$, $l\geq 1$, $\kappa\geq m$, $\nu>l+1$, and $K>0$ such that 
			\begin{equation*}
				\mathbb{E}\bigl(|\phi(x)-\phi(y)|^l\bigr)
					\leq K\log_2\bigl(d(x,y)^{-1}\bigr)^{-\nu}d(x,y)^{\kappa}
			\end{equation*}
			for all $x$, $y\in M$ with $d(x,y)< \rho$, then $\phi$ has a modification which 
			is locally uniformly continuous.
	\item If there are $\rho\in(0,1]\,$, $l\geq 1$, $\gamma\in(0,1)$, and $\alpha>1$ such that
			\begin{equation*}
				\mathbb{E}\bigl(|\phi(x)-\phi(y)|^l\bigr)
					\leq K\log_2\bigl(d(x,y)^{-1}\bigr)^{-\ga}d(x,y)^{m+l\gamma}
			\end{equation*}
			for all $x$, $y\in M$ with $d(x,y)<\rho$, the modification can be chosen to have 
			locally H\"older continuous sample paths of order~$\gamma$. 
\end{enum_a}
\end{corollary}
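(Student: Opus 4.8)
The plan is to reduce both parts to results already proved for Riemannian manifolds---Theorem~\ref{thm2} for part~(a) and Corollary~\ref{corii} for part~(b)---by converting the hypothesized moment bounds into tail bounds of the form~\eqref{phibound} through Markov's inequality. The common first step is the observation that for any increasing $r$ with $r(0)=0$ and any $x$, $y\in M$ with $d(x,y)<\rho$, using $t\mapsto t^l$ increasing on $[0,\infty)$,
\begin{equation*}
	P\bigl(|\phi(x)-\phi(y)|>r(d(x,y))\bigr)
		= P\bigl(|\phi(x)-\phi(y)|^l > r(d(x,y))^l\bigr)
		\le \frac{\E\bigl(|\phi(x)-\phi(y)|^l\bigr)}{r(d(x,y))^l}.
\end{equation*}
In each part it then suffices to choose $r$ so that the right-hand side is dominated by a function $q$ of the admissible shape, and to apply the relevant earlier result.

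For part~(a) I would take $r(h)=\log_2(h^{-1})^{-\gb}$ as in~\eqref{defr}, with $\gb>1$ to be fixed, noting that this $r$ is strictly increasing on $(0,\rho)$ since $\rho\le 1$. Substituting the moment hypothesis into the displayed estimate gives
\begin{equation*}
	\frac{\E\bigl(|\phi(x)-\phi(y)|^l\bigr)}{r(d(x,y))^l}
		\le K\,\log_2\bigl(d(x,y)^{-1}\bigr)^{-(\nu-l\gb)}\,d(x,y)^\kappa.
\end{equation*}
Because $d(x,y)<\rho\le 1$ and $\kappa\ge m$, we have $d(x,y)^\kappa\le d(x,y)^m$, so the right-hand side is bounded by $q(d(x,y))$ with $q$ as in~\eqref{defq} and $\ga=\nu-l\gb$. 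I expect the one delicate point to be the choice of $\gb$: one needs simultaneously $\gb>1$ and $\ga=\nu-l\gb>1$. The hypothesis $\nu>l+1$ is exactly what makes this possible, since it gives $(\nu-1)/l>1$, so any $\gb\in\bigl(1,(\nu-1)/l\bigr)$ yields $\ga=\nu-l\gb>1$ as required. With such $r$ and $q$, the bound~\eqref{phibound} holds and Theorem~\ref{thm2} delivers a locally uniformly continuous modification.

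For part~(b) I would instead take $r(h)=h^\gamma$ as in~\eqref{defrnew}, which is the function appearing in Corollary~\ref{corii}. Here the computation is immediate: inserting the moment hypothesis,
\begin{equation*}
	\frac{\E\bigl(|\phi(x)-\phi(y)|^l\bigr)}{d(x,y)^{l\gamma}}
		\le K\,\log_2\bigl(d(x,y)^{-1}\bigr)^{-\ga}\,d(x,y)^m,
\end{equation*}
and the right-hand side is precisely $q(d(x,y))$ for $q$ as in~\eqref{defq} with the \emph{same} exponent $\ga>1$, because the factor $d(x,y)^{l\gamma}$ produced by $r^l$ cancels exactly against the exponent $m+l\gamma$ in the moment bound. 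Thus~\eqref{phibound} holds for this pair, and Corollary~\ref{corii} provides a modification that is locally H\"older continuous of order~$\gamma$. The main obstacle in the whole argument is therefore confined to part~(a), namely the exponent bookkeeping together with the verification that $\nu>l+1$ and $\kappa\ge m$ are precisely the conditions needed to land inside the hypotheses of Theorem~\ref{thm2}; part~(b) is essentially a single application of Markov's inequality.
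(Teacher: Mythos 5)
Your proof is correct and follows exactly the route the paper intends: the paper gives no detailed argument, saying only that ``the standard application of Chebychev's inequality'' yields the corollary from Theorem~\ref{thm2} and Corollary~\ref{corii}, which is precisely your Markov-inequality reduction. Your exponent bookkeeping in part~(a) --- choosing $\gb\in\bigl(1,(\nu-1)/l\bigr)$ so that both $\gb>1$ and $\ga=\nu-l\gb>1$, using $\kappa\ge m$ and $\rho\le 1$ to absorb $d(x,y)^\kappa$ into $d(x,y)^m$ --- correctly fills in the details the paper leaves implicit.
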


In case of a Gaussian random field, Corollary~\ref{coriii} leads to a condition which can be formulated 
in terms of the variogram of the random field:

\begin{corollary}	\label{coriv}
Assume that $\phi$ is a Gaussian random field defined on an $m$--dimensional 
Riemannian manifold $M$, $m\in\N$, with topological metric $d$ and variogram 
$\sigma(x,y)^2=\mathbb{E}\bigl((\phi(x)-\phi(y))^2\bigr)$. If there exist $\rho\in(0,1]\,$, 
$\eta \in (0,1)$, and $C>0$ such that
\begin{equation}	\label{cond_vario}
	\sigma(x,y)^2\leq C \,d(x,y)^{\eta}
\end{equation}
for all $x$, $y\in M$ with $d(x,y)< \rho$, 
then $\phi$ has a modification which is locally H\"older continuous of order $\gamma$ for all 
$\gamma<\eta/2$.
\end{corollary}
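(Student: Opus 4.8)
The plan is to derive the assertion from Corollary~\ref{coriii}(b) by converting the bound on the variogram into a moment bound on the increments of the form required there. So I would fix an order $\gamma<\eta/2$ once and for all, and seek an exponent $l\ge 1$, a constant $\alpha>1$, and a constant $K>0$ for which the hypothesis of Corollary~\ref{coriii}(b) is satisfied; feeding these into that corollary yields a modification that is locally H\"older continuous of order $\gamma$. Since $\gamma<\eta/2$ is arbitrary, this establishes the claim for every such $\gamma$.

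The first step is to pass from the variogram to higher absolute moments of the increments, and this is where the Gaussian hypothesis enters. Because $\phi$ is Gaussian, for all $x$, $y\in M$ the increment $\phi(x)-\phi(y)$ is a real Gaussian random variable whose second moment is $\sigma(x,y)^2$. Hence for every $l\ge 1$ there is a constant $c_l$, depending only on $l$, with
\begin{equation*}
	\mathbb{E}\bigl(|\phi(x)-\phi(y)|^l\bigr)\le c_l\,\bigl(\sigma(x,y)^2\bigr)^{l/2}.
\end{equation*}
If the field is centered, $c_l$ is simply the $l$-th absolute moment of a standard normal variable; in general one may take $c_l$ to be the supremum of $\mathbb{E}(|W|^l)$ over all Gaussian $W$ with $\mathbb{E}(W^2)=1$, which is finite since the parameters $(\mu,s)$ with $\mu^2+s^2=1$ range over a compact set on which $\mathbb{E}(|W|^l)$ depends continuously. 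Inserting the standing hypothesis~\eqref{cond_vario} then gives, for all $x$, $y$ with $d(x,y)<\rho$,
\begin{equation*}
	\mathbb{E}\bigl(|\phi(x)-\phi(y)|^l\bigr)\le c_l\,C^{l/2}\,d(x,y)^{l\eta/2}.
\end{equation*}

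It remains to match this right-hand side with the prescribed shape $K\log_2(d(x,y)^{-1})^{-\alpha}\,d(x,y)^{m+l\gamma}$, and this bookkeeping is the only delicate point. The idea is to take $l$ so large that, after extracting the factor $d(x,y)^{m+l\gamma}$, a strictly positive power of $d(x,y)$ remains which can dominate an inverse power of the logarithm. Concretely, I would choose $l>m/(\eta/2-\gamma)$ (the denominator being positive since $\gamma<\eta/2$, so that automatically $l>2m\ge 1$), set $\theta:=l\eta/2-m-l\gamma>0$, and fix any $\alpha>1$. Writing $d(x,y)^{l\eta/2}=d(x,y)^{m+l\gamma}\,d(x,y)^{\theta}$, one observes that the function $h\mapsto h^{\theta}\log_2(h^{-1})^{\alpha}$ is continuous on $(0,1)$ and tends to $0$ at both endpoints, hence is bounded on $(0,\rho)\subseteq(0,1)$ by some $C'$; thus $d(x,y)^{\theta}\le C'\log_2(d(x,y)^{-1})^{-\alpha}$. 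Taking $K=c_l\,C^{l/2}\,C'$ then reproduces exactly the hypothesis of Corollary~\ref{coriii}(b) for the chosen $l$, $\gamma$, and $\alpha$, and that corollary delivers the desired locally H\"older continuous modification of order $\gamma$. The main obstacle is not any single estimate but this interplay of the three free parameters: the gap $\eta/2-\gamma$ must be spent simultaneously on compensating the manifold dimension $m$ and on absorbing the logarithmic correction, which is precisely what enlarging $l$ accomplishes.
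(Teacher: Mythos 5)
Your proposal is correct and follows exactly the route the paper intends: the paper states Corollary~\ref{coriv} as a direct consequence of Corollary~\ref{coriii}(b), using the fact that Gaussian increments have all absolute moments controlled by powers of the variogram, and then choosing $l$ large enough that the surplus exponent $l(\eta/2-\gamma)-m>0$ absorbs both the dimension term and the logarithmic factor. Your handling of the non-centered case and the explicit parameter bookkeeping ($l>m/(\eta/2-\gamma)$, arbitrary $\alpha>1$) fills in precisely the details the paper leaves implicit.
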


\vspace*{\baselineskip}
Applied to the specific case of the $m$--dimensional unit sphere embedded in~$\R^{m+1}$, 
this corollary recovers the results from~\cite{LaSch15}, where isotropic Gaussian random 
fields on spheres are considered.

\providecommand{\bysame}{\leavevmode\hbox to3em{\hrulefill}\thinspace}
\providecommand{\MR}{\relax\ifhmode\unskip\space\fi MR }
\providecommand{\MRhref}[2]{%
  \href{http://www.ams.org/mathscinet-getitem?mr=#1}{#2}
}
\providecommand{\href}[2]{#2}

\setlength{\parindent}{0pt}

\end{document}